\newcommand{\C}{\mathbb{C}}
\newcommand{\cH}{\mathcal{H}}
\newcommand{\R}{\mathbb{R}}
\newcommand{\N}{\mathbb{N}}
\newcommand{\I}{\mathbb{I}}
\renewcommand{\d}{\mathrm{d}}
\DeclareMathAlphabet{\mathpzc}{OT1}{pzc}{m}{it}
\newtheorem{pro}{Proposition}
\newtheorem{definition}{Definition}
\newtheorem{theorem}{Theorem}
\newtheorem{cor}{Corolary}
\begin{document}
\title{On the Christoffel--Darboux formula for\\ generalized  matrix orthogonal  polynomials of multigraded-Hankel type}
\author{Carlos \'{A}lvarez-Fern\'{a}ndez}
\address{Departamento de M\'{e}todos Cuantitativos, Universidad Pontificia Comillas, 28015-Madrid, Spain}
\email{calvarez@cee.upcomillas.es}
\author{ Manuel Ma\~{n}as}
\address{Departamento de F\'{\i}sica Te\'{o}rica II, Universidad Complutense de Madrid, 28040-Madrid, Spain}
\email{manuel.manas@ucm.es}
\keywords{Generalized  matrix orthogonal  polynomials,  Christoffel--Darboux formula, multigraded-Hankel  symmetry}
\subjclass{33C45,42C05,15A23,37K10}
\thanks{MM thanks economical support from the Spanish ``Ministerio de Econom\'{\i}a y Competitividad" research project MTM2012-36732-C03-01,  \emph{Ortogonalidad y aproximacion; Teoria y Aplicaciones}}

\maketitle
\begin{abstract}
We obtain an extension of the Christoffel--Darboux formula for matrix orthogonal polynomials with a generalized Hankel symmetry, including the Adler-van Moerbeke generalized orthogonal polynomials.
\end{abstract}
\section{Introduction}

In \cite{adler-van-moerbeke} Adler and van Moerbeke considered generalized orthogonal polynomials which depended on  a band condition that we extended in \cite{cum} to a more general scenario: what we called multi-graded Hankel situation. This leads to a multicomponent extension of generalized orthogonal polynomials which in some cases can be described in terms of multiple orthogonal polynomials of mixed type with a type II normalization. The aim of this paper is to construct the Christoffel--Darboux formula in this context,  using the the Gaussian (or $LU$) factorization techniques.

One of the approaches to the classical integrable systems comes from the use of infinite dimensional Lie groups, that has its roots in the works of M. Sato \cite{sato}, M. Mulase \cite{mulase}, and the profound analysis of the Toda integrable hierarchy made by K. Takasaki and K. Ueno  \cite{ueno-takasaki}. Mark Adler and Pierre van Moerbeke performed an important  work connecting  the Gaussian factorization methods in the KP hierarchy with orthogonal polynomials, generalized orthogonality, and Darboux transformations, see \cite{adler}-\cite{adler-vanmoerbeke-5}. The last of them, signed also by P. Vanhaecke, studied multiple orthogonal polynomials, and subsequently multi-component hierarchies. The link that they use in \cite{adler-vanmoerbeke-5} is the Riemann--Hilbert approach that E. Daems and A. Kuijlaars had developed before  in \cite{daems-kuijlaars0} and \cite{daems-kuijlaars}. Those ideas of using Riemann--Hilbert techniques to orthogonal polynomials come from the seminal paper \cite{fokas} by A. S. Fokas, A. R. Its and A. V. Kitaev. We also mention
 \cite{manas-martinez-alvarez} and \cite{manas-martinez} where the authors revisited the group theoretic study of the so called multi-component 2D-Toda hierarchy using the known tools of Gaussian factorization.

Guided by the ideas of \cite{manas-martinez-alvarez}, \cite{bergvelt}, \cite{adler-vanmoerbeke-5} and \cite{Cafasso} we studied, using Gaussian factorization techniques,  the connections between 2D Toda-type integrable hierarchies  with matrix bi-orthogonal polynomials \cite{cum}, multiple orthogonal polynomials of mixed type \cite{afm-2} and orthogonal polynomials on the unit circle \cite{am-CMV}. An interesting byproduct  is a method -- based on the $LU$ factorization-- that turns to be useful when one is interested in generalizations of the Christoffel--Darboux formula for orthogonal polynomials. Despite, the formulas were known (e.g. \cite{daems-kuijlaars}, and \cite{Barroso-Vera}),  the method is general enough to be applied in many  different situations. In this paper we present the analysis for generalized matrix bi-orthogonal polynomials with generalized Hankel symmetries \cite{cum}.

The layout of the paper is as follows. After this introduction in \S \ref{section.intro} we explain the basics of the Gaussian factorization method for matrix bi-orthogonal polynomials in section \ref{section.intro.Gaussian}. The next step is to define the kind of symmetries that are present in this framework, that is what we do in section \ref{section.Hankel.multigraded}. To conclude we put all the pieces together in section \ref{section.Matrix.CD} with a theorem that shows the application of the factorization techniques to the obtention of generalized Christoffel--Darboux formulas.
\label{section.intro}
\subsection{Remainder: The Gaussian factorization method for matrix bi-orthogonal polynomials}
\label{section.intro.Gaussian}
Let us consider a family of $N \times N$ matrix bi-orthogonal polynomials on the real line $\{p_n,\tilde p_n\}$ and a matrix weight $\rho:\R\to \C^N\times \C^N$ such that for $n,n' =0,1,\dots$
\begin{align}\label{biorth}
\langle p_n, \tilde p_{n'} \rangle := \int_{\R} p_n(x)\rho(x) \tilde p_{n'}(x)^{\top} \d x=\delta_{n,n'} \I_N.
\end{align}

It follows from equation \eqref{biorth} that both families of polynomials satisfy orthogonality relations that can be written as
\begin{align}
\langle p_n, x^j \rangle &=\int_{\R}p_n(x) \rho(x) x^j \d x=0, \\
\langle x^j, \tilde p_n \rangle &= \int_{\R} x^j \rho(x) \tilde p_{n}(x)^{\top} \d x=0.
\end{align}

Given a weight that determines a matrix orthogonality problem, it is possible to define what we call the moment matrix of the weight
\begin{definition}
Given a matrix weight $\rho$ we can define the moment matrix $g$ as the following semi-infinite $N \times N$ block matrix
\begin{align}
g_{ij}&=\int_{\R}x^i \rho(x) x^j \d x=\int_{\R}\chi(x)\rho(x)\chi(x)^{\top} \d x,
\end{align}
where $\chi$ is the monomial matrix sequence $\chi(x):=(I_N, x I_N, x^2 I_N, \dots)^{\top}$.
\end{definition}

One method that has proved useful for studying recurrence relations, Christoffel--Darboux formulas and connection with Toda-type integrable hierarchies is the $LU$ factorization method, or also called the Gaussian factorization method \cite{cum} and \cite{afm-2}. The factorization problem consists in finding two matrices $S, \tilde S$ such that $g=S^{-1}\tilde S$. The matrix $S$ is a block lower triangular matrix with $I_{\N}$ in the diagonal, whereas the matrix $\tilde S$ must be block upper triangular as it is shown below
\begin{align}S&=\begin{pmatrix} I_N & 0_N & 0_N &\dots  \\
                                                 S_{1,0} & I_{N} & 0_{N}  & \dots \\
                                                 S_{2,0} & S_{2,1}& I_{N} & \dots \\
                                                 \vdots & \vdots & \vdots & \ddots \end{pmatrix},&
                                                   \tilde S&=\begin{pmatrix} \tilde S_{0,0} & \tilde S_{0,1} & \tilde S_{0,2} & \dots  \\
                                                 0_N & \tilde S_{1,1} & \tilde S_{1,2}  & \dots \\
                                                 0_N & 0_N & \tilde S_{2,2} & \dots \\
                                                 \vdots & \vdots & \vdots & \ddots \end{pmatrix}.
\end{align}

The coefficients of the matrices $S,\tilde S$ are the coefficients of the polynomials $\{p_n,\tilde p_n\}$ as we are going to prove now.
\begin{definition}
The semi-infinite block vectors $p$ and $\tilde p$ are given by
\begin{align}\begin{aligned}
p(x)&:=S\chi(x),\\
\tilde p(x)&:=(\tilde S ^{-1})^{\top}\chi(x). \end{aligned}
\end{align}
\end{definition}

Both $p=(p_0,p_1,\dots)^{\top}$ and $\tilde p=(\tilde p_0, \tilde p_1, \dots)^{\top}$ are sequences of matrix polynomials that, as we are going to see, are bi-orthogonal respect to the matrix weight $\rho(x)$.
\begin{pro}
Given the elements $p_i$ and $\tilde p_j$ of the sequences $p,\tilde p$ then $\langle p_i, p_j \rangle=0$.
\end{pro}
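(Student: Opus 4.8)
The plan is to prove the stronger block statement $\langle p,\tilde p\rangle=\I$, i.e. $\langle p_i,\tilde p_j\rangle=\delta_{i,j}\I_N$, by feeding the definitions $p=S\chi$, $\tilde p=(\tilde S^{-1})^{\top}\chi$ straight into the bilinear form and then invoking the factorization $g=S^{-1}\tilde S$; the vanishing asserted in the proposition is the off-diagonal part of this.

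First I would record the block-transpose identity $\tilde p(x)^{\top}=\chi(x)^{\top}\tilde S^{-1}$, where $\chi(x)^{\top}=(\I_N,x\I_N,x^2\I_N,\dots)$ is the semi-infinite block row; this is the one spot where the conventions for the block transpose have to be handled with care. Then
\[
\langle p,\tilde p\rangle=\int_{\R}p(x)\rho(x)\tilde p(x)^{\top}\,\d x=\int_{\R}S\,\chi(x)\rho(x)\chi(x)^{\top}\,\tilde S^{-1}\,\d x .
\]
Next, since $S$ and $\tilde S^{-1}$ do not depend on $x$, I would pull them out of the integral; the only thing to check is that this is legitimate for semi-infinite matrices, and it is, because $S$ (hence every $p_i$) is block lower triangular, so each entry of the integrand is a finite sum of monomials times $\rho$ and the interchange of $\sum$ and $\int$ is entrywise trivial. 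What is left inside is $\int_{\R}\chi(x)\rho(x)\chi(x)^{\top}\,\d x=g$, by the definition of the moment matrix. Finally, substituting $g=S^{-1}\tilde S$ gives $\langle p,\tilde p\rangle=S\,S^{-1}\tilde S\,\tilde S^{-1}=\I$, which is the claim; in particular $\langle p_i,\tilde p_j\rangle=0$ for $i\neq j$ and the diagonal blocks equal $\I_N$.

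The very same computation with $\chi$ in one of the slots yields the companion orthogonality relations at no extra cost: $\langle p,\chi\rangle=Sg=\tilde S$ is block upper triangular, so $\langle p_i,x^j\rangle=\tilde S_{i,j}=0$ for $j<i$, while $\langle\chi,\tilde p\rangle=g\tilde S^{-1}=S^{-1}$ is block lower triangular, so $\langle x^i,\tilde p_j\rangle=(S^{-1})_{i,j}=0$ for $i<j$; this also identifies the diagonal blocks $\tilde S_{i,i}$ with the norming constants. I do not expect a genuine obstacle here, since the argument is purely formal once the factorization is available; the only real care points are (i) the bookkeeping that turns $\tilde p=(\tilde S^{-1})^{\top}\chi$ into $\tilde p^{\top}=\chi^{\top}\tilde S^{-1}$, and (ii) matching the placement of $\rho$ and of the $\top$ in the definition of $\langle\cdot,\cdot\rangle$ with the definition of $g$ exactly, so that the middle integral is $g$ itself and not a transpose or a reindexing of it.
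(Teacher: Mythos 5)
Your proposal is correct and is essentially the paper's own argument: substitute $p=S\chi$, $\tilde p^{\top}=\chi^{\top}\tilde S^{-1}$ into the pairing, pull out the $x$-independent factors to get $Sg\tilde S^{-1}$, and use $g=S^{-1}\tilde S$ to conclude $\langle p_i,\tilde p_j\rangle=\delta_{ij}\I_N$. The extra remarks on $\langle p,\chi\rangle=\tilde S$ and $\langle\chi,\tilde p\rangle=S^{-1}$ are a harmless bonus not needed for the statement.
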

\begin{proof}
It is a consequence of the definition of $S$, $\tilde S$
\begin{align*}
\langle p_i, p_j \rangle&=\int_{\R} p_i(x)\rho(x) \tilde p_j(x)^{\top} \d x= \int_{\R}(p(x) \rho(x) \tilde p(x)^{\top})_{ij} \d x =(S\int_{\R}\chi(x) \rho(x) \chi(x)^{\top} \d x \tilde S^{-1})_{ij}\\
&=(Sg\tilde S^{-1})_{ij}=\I_{N}\delta_{ij}.
\end{align*}
\end{proof}

\subsubsection{Hankel and multigraded-Hankel symmetries}
\label{section.Hankel.multigraded}
Along with the Gaussian factorization, another important tool in our analysis is the Hankel-type symmetries that arise from the structure of the moment matrix. We can proceed with some definitions
\begin{definition}
\begin{enumerate}
\item
We say that a block-matrix $M_{ij}$ is a block-Hankel matrix if and only if $M_{i+1,j}=M_{i,j+1}$.
\item
The shift operator $\Lambda$ is a block semi-infinite matrix given by $\Lambda_{ij}=\I_{N}\delta_{i,j-1}$.
\end{enumerate}
\end{definition}
The matrix representation for the shift operator $\Lambda$ given in the definition has the following form
\begin{align*}
\Lambda &=\begin{pmatrix} 0_N & \I_N & 0_N & 0_N & \cdots \\ 0_N & 0_N & \I_N & 0_N & \cdots \\ 0_N & 0_N & 0_N & \I_N & \cdots \\ 0_N & 0_N & 0_N & 0_N & \cdots \\ \vdots & \vdots & \vdots & \vdots &\ddots \end{pmatrix} &
\Lambda^{\top} &=\begin{pmatrix} 0_N & 0_N & 0_N & 0_N & \cdots \\ \I_N & 0_N & 0_N & 0_N & \cdots \\ 0_N & \I_N & 0_N & 0_N & \cdots \\ 0_N & 0_N & \I_N & 0_N & \cdots \\ \vdots & \vdots & \vdots & \vdots &\ddots \end{pmatrix}.
\end{align*}

The so-called shift operator is useful not only for representing shifts on sequences, but also for characterizing the Hankel symmetry of the moment matrix $g$, as we will state in the following proposition that can be proved directly
\begin{pro}
\begin{enumerate}
\item
The shift operator $\Lambda$ and the sequence vector $\chi(x)$ have the following eigenvalue properties
\begin{align}
\Lambda \chi(x)&=x \chi(x) & \chi(x)^{\top} \Lambda^{\top}=x \chi(x)^{\top}.
\end{align}
\item
The moment matrix $g$ is a block-Hankel matrix and satisfies $\Lambda g = g \Lambda^{\top}$.
\end{enumerate}
\end{pro}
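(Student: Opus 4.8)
The plan is to verify each assertion by a direct index computation, since both parts follow immediately from the explicit descriptions of $\Lambda$, $\chi$ and $g$ given above. No deep idea is needed; the work is entirely bookkeeping of block indices, so I expect no genuine obstacle.

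First I would handle the eigenvalue property. Writing $\chi(x)=(x^i\I_N)_{i\ge 0}$ as a block column, the $i$-th block of $\Lambda\chi(x)$ is $\sum_{k\ge 0}\Lambda_{ik}\chi_k(x)=\sum_{k\ge 0}\delta_{i,k-1}\,x^k\I_N=x^{i+1}\I_N=x\,(x^i\I_N)$, which is exactly $x$ times the $i$-th block of $\chi(x)$; hence $\Lambda\chi(x)=x\chi(x)$. Transposing this identity, and using that $x\in\R$ is a scalar that commutes through, gives $\chi(x)^\top\Lambda^\top=x\,\chi(x)^\top$.

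For part (2), the block-Hankel property is read off from $g_{ij}=\int_{\R}x^i\rho(x)x^j\,\d x=\int_{\R}x^{i+j}\rho(x)\,\d x$, which depends on $i$ and $j$ only through the sum $i+j$; therefore $g_{i+1,j}=g_{i,j+1}$, i.e. $g$ is block-Hankel. For the intertwining relation $\Lambda g=g\Lambda^\top$ I would give whichever of two equivalent arguments reads better. The combinatorial one: $(\Lambda g)_{ij}=\sum_k\Lambda_{ik}g_{kj}=g_{i+1,j}$ and $(g\Lambda^\top)_{ij}=\sum_k g_{ik}(\Lambda^\top)_{kj}=\sum_k g_{ik}\Lambda_{jk}=g_{i,j+1}$, and these agree by the block-Hankel property just established. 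The slicker one uses part (1): since $g=\int_{\R}\chi(x)\rho(x)\chi(x)^\top\,\d x$, one has $\Lambda g=\int_{\R}(\Lambda\chi(x))\rho(x)\chi(x)^\top\,\d x=\int_{\R}x\,\chi(x)\rho(x)\chi(x)^\top\,\d x=\int_{\R}\chi(x)\rho(x)\big(x\,\chi(x)^\top\big)\,\d x=\int_{\R}\chi(x)\rho(x)\chi(x)^\top\Lambda^\top\,\d x=g\Lambda^\top$.

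The only point that deserves an explicit word is the legitimacy of moving the semi-infinite matrices $\Lambda$, $\Lambda^\top$ past the integral sign and of rearranging the formally infinite block products: this is harmless because $\Lambda$ has a single nonzero block in each row and column, so every entry of $\Lambda\chi$, $\Lambda g$ and $g\Lambda^\top$ is a \emph{finite} block sum and the computation may be carried out entrywise, under the standing assumption that all moments $\int_{\R}x^{m}\rho(x)\,\d x$ exist. Hence the proposition, as claimed, is proved directly.
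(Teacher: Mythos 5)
Your proof is correct and is exactly the direct entrywise verification the paper has in mind: the paper omits the proof entirely, remarking only that the proposition ``can be proved directly,'' and your block-index computations (plus the harmless interchange of $\Lambda$ with the integral, justified by the finiteness of the block sums) supply precisely that argument.
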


As we are interested in slightly more general situations the ordinary Hankel symmetry can be generalized to what we will call multigraded-Hankel symmetry as we see in the following definition. Given a multi-index $\vec n=(n_1,\dots,n_N)$ with $n_a$ non-negative integers we can define the following power $A^{\vec n}=\sum_{a=1}^N A^{n_a}E_{aa}$.
\begin{definition}
Given two multi-indices $\vec n$ and $\vec m$ we say that a block semi-infinite matrix $g$ is a multigraded-Hankel type matrix $(\vec n,\vec m)$ if
\begin{align}\label{mhankel}
    \Lambda^{\vec n} g= g (\Lambda^{\top})^{\vec m}.
\end{align}
\end{definition}
If we consider that $g_{ij}$ is a block of $g$ it is possible to extend the notation $g_{ij}=(g_{ij,ab})_{1\leq a,b\leq N}$. Using this notation the multigraded-Hankel condition can be written as $g_{i+n_a\,j,ab}=g_{i\, j+m_b,ab}$. It is possible to construct a family of matrices with this kind of symmetry using the following moment matrices
\begin{align}\label{mghankel}
  g_{ij,ab}=\int_\R x^i\rho_{j,ab}(x)\d x,
\end{align}
where the weights satisfy a general periodicity condition like
\begin{align}\label{periodicity}
  \rho_{j+m_b,ab}(x)=x^{n_a}\rho_{j,ab}(x).
\end{align}

Given the weights $\rho_{0,ab},\dots,\rho_{m_b-1,ab}$,
the rest of them are given by \eqref{periodicity}. This is a generalization of the moment problem studied by M. Adler and P. van Moerbeke in \cite{adler-van-moerbeke} for the particular case of one component ($N=1$) and with $n_1=m_1$, what they refered as the band condition. The consideration of the recursion relations that arise from this generalized symmetry can be also found in \cite{adler-vanmoerbeke-2}.



In this generalized case we have to define the adequate objects to preserve the similarities with the Hankel case.
\begin{definition}
In the multigraded-Hankel case we can define the following objects
\begin{enumerate}
\item
The vector sequences $\chi_1$ and $\chi_2$
\begin{align}
\chi_1(x)&:=(\I_N, x \I_N, x^2 \I_N, \dots )^{\top}, & \chi_2(x):=(\rho_0(x), \rho_1(x),\rho_2(x),\dots )^{\top},
\end{align}
where the weights verify the periodicity condition \eqref{periodicity}.
\item
The matrix moment defined by
\begin{align}
g:=\int_{\R}\chi_1(x) \chi_2(x)^{\top} \d x.
\end{align}
\item
The generalized families of matrix polynomials and matrix dual polynomials
\begin{align}
p(x)&:=S \chi_1(x), & \tilde p(x)&:=(\tilde S^{-1})^{\top}\chi_2(x).
\end{align}
\end{enumerate}
\end{definition}

The reader may notice that under this definition, the vector $p$ is still a family of matrix polynomials, but the dual family $\tilde p$ is not a family of polynomials but a family of linear combinations of matrix weights with coefficients given in $\tilde S^{-1}$. We will call this objects `` linear forms'' or ``generalized polynomials''. As we will see in the next section \ref{section.Matrix.CD} many of the formulas are valid in both cases.

The kind of equations that appear when we study this orthogonality problems suggest a connection with some kind of multiple orthogonality, as it is discussed in \cite{cum}. Now we are interested in the analogue of Christoffel--Darboux formulas that can be obtained if we adopt this generalized Hankel symmetry.

\section{Christoffel--Darboux formulas for generalized Hankel symmetries}
\label{section.Matrix.CD}

We will study some extensions of the classical Christoffel--Darboux formula for generalized Hankel symmetries. A known consequence of the Hankel symmetry that is present in the moment matrix associated to an orthogonality problem is the so called Chistoffel--Darboux formula. For a set $\{P_{n}\}$ of scalar orthogonal polynomials in the real line (under a weight $w(x)$) it is true that
\begin{align}\label{CD.classical}
\sum_{k=0}^{n-1}\frac{P_{k}(x)P_{k}(y)}{h_k}=\frac{1}{h_{n-1}}\frac{P_{n}(x)P_{n-1}(y)-P_{n-1}(x)P_{n}(y)}{x-y}.
\end{align}

Our objective in this section is to obtain a generalization of the classical formula \eqref{CD.classical} for the case of matrix polynomials and not standard Hankel symmetry.
\begin{definition}
Given a system of bi-orthogonal matrix polynomials $\{p_l,\tilde p_l\}$ we define the $l$-th Christoffel--Darboux Kernel as the following $N \times N$-matrix
\begin{align}
K^{[l]}(x,y):=\sum_{k=0}^{l-1} \tilde p_{k}(x)^{\top} p_{k}(y).
\end{align}
\end{definition}
One of its interesting properties has to do with the representation of orthogonal projections. It is obvious that any semi-infinite vector $v$ (with scalar or matrix components) can be written using a block notation as it is shown
\begin{align*}
v&= \left(\begin{array}{c}
v^{[l]}\\\hline\\[-10pt]
 v^{[\geq l]}
\end{array}\right)
\end{align*}
where $v^{[l]}$ is the semi-infinite vector with the first $l$ coefficients of $v$ and $v^{[\geq l]}$ the semi-infinite vector that contains the rest of the components.
This decomposition induces the following structure for an arbitrary semi-infinite matrix.
\begin{align*}
  g= \left(\begin{array}{c|c}
   g^{[l]}&g^{[l,\geq l]}\\\hline\\[-10pt]
     g^{[\geq l,l]}& g^{[\geq l]}
\end{array}\right).
\end{align*}
If we apply this to a $LU$ factorizable matrix $g$, the block structure is preserved by the factorization, that is:
\begin{align*}
  g^{[l]}&=(S^{[l]})^{-1}\tilde S^{[l]},&(S^{-1})^{[l]}&=(S^{[l]})^{-1},&(\tilde S^{-1})^{[\geq l]}&=(\tilde S^{[\geq l]})^{-1}.
\end{align*}

With this block structure it is natural to define the following $ \C^{N \times N} $ matrix polynomial spaces as the following
\begin{align}
\cH^{[l]}&:=\C\big\{\chi^{(0)},\dots,\chi^{(l-1)}\big\},
\end{align}
and its limit is the polynomial space
\begin{align}
\cH&:=\Big\{\sum_{l\leq k \ll  \infty} c_k \chi^{(k)},c_k\in\ C^{N \times N} \Big\},
\end{align}
where $l\ll \infty$ means that there are only a finite number of non zero terms.
That space $\cH$ has the following characterizations as well
\begin{align}
\cH^{[l]}&=\C\{p_0,\dots,p_{l-1}\}=\C\{\tilde p_0,\dots ,\tilde p_{l-1}\}.
\end{align}

We consider the following spaces
\begin{align*}
(\cH^{[l]})^{\bot_2}&:=\Big\{\sum_{l\leq k \ll  \infty} c_k p_k ,c_k\in\C^{N \times N}\Big\},& (\cH^{[l]})^{\bot_1}&:=\Big\{\sum_{l\leq k \ll  \infty} c_k \tilde p_k,c_k\in\C^{N \times N}\Big\},
\end{align*}
that have the following properties
\begin{align*}
\langle \cH^{[l]},(\cH^{[l]})^{\bot_1} \rangle &=0, & \langle(\cH^{[l]})^{\bot_2},\cH^{[l]}\rangle &=0,
\end{align*}
and give meaning to the following decompositions
\begin{align*}
 \cH &=\cH^{[l]}\oplus (\cH^{[l]})^{\bot_1}=\cH^{[l]}\oplus (\cH^{[l]})^{\bot_2},
\end{align*}
that induce the following projections
\begin{align}
  \pi_1^{(l)}&:\cH \to \cH^{[l]},&  \pi^{(l)}_2&:\cH \to \cH^{[l]},
\end{align}
where $ \pi_1^{(l)} $ is the projection onto $ \cH^{[l]} $ parallel to $ (\cH^{[l]})^{\bot_1} $ and $ \pi^{(l)}_2 $ is the projection onto $ \cH^{[l]} $ parallel to $ \oplus (\cH^{[l]})^{\bot_2} $. Those projections are extensions of the orthogonal projection to the bi-orthogonal case.

Now we make a comment on some remarkable and already known properties of the Christoffel--Darboux kernel.
\begin{pro}
\begin{enumerate}
\item The Christoffel--Darboux kernel is the integral kernel of the following projections
  \begin{align}
  \begin{aligned}
  (\pi^{(l)}_{ 1}f)(x)^{\top}&=\int_{\R} K^{[l]}(x,y) \rho(x) f(y)^{\top} \d y, & \forall f \in \cH,\\
  (\pi^{(l)}_{ 2}f)(x) &= \int_{\R}  f(y) \rho(x) K^{[l]}(y,x) \d y , &\forall f \in \cH.
  \end{aligned}
\end{align}
\item The kernel  $K^{[l]}(x,y)$ verifies the known reproducing property.
  \begin{align}
K^{[l]}(x,y)=\int_{\R} K^{[l]}(x,u) K^{[l]}(u,y) \d y.
\end{align}
\item $ K^{[l]}(x,y) $ is a sesquilinear form whose associated matrix is the inverse of the moment matrix (result known sometimes as Aitken--Berg--Collar theorem or  $ABC$ theorem \cite{Simon-CD}).
\begin{align}
     K^{[l]}(x,y)&=(\chi^{[l]}(x))^\dagger (g^{[l]})^{-1} \chi^{[l]}(y).
\end{align}
\end{enumerate}
\end{pro}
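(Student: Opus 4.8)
The three assertions decrease in depth, and the plan is to establish them in the order (3), (2), (1): item (3), the Aitken--Berg--Collar identity, is the algebraic core, and (2) and (1) then follow with little extra work — and in fact can be re-derived independently straight from biorthogonality, which is the route I would actually write out.

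\emph{Item (3).} The argument is purely algebraic. Truncating the factorization $g=S^{-1}\tilde S$ to its leading $l\times l$ block gives $g^{[l]}=(S^{[l]})^{-1}\tilde S^{[l]}$, while truncating the definitions of the two families gives $p^{[l]}(y)=S^{[l]}\chi_1^{[l]}(y)$ and $\tilde p^{[l]}(x)=\big((\tilde S^{[l]})^{-1}\big)^{\top}\chi_2^{[l]}(x)$; here one uses the already recorded fact that block-triangularity is preserved under truncation, so that $(S^{-1})^{[l]}=(S^{[l]})^{-1}$ and $(\tilde S^{-1})^{[l]}=(\tilde S^{[l]})^{-1}$. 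Substituting these into $K^{[l]}(x,y)=\sum_{k=0}^{l-1}\tilde p_k(x)^{\top}p_k(y)$ and reading the finite sum over $k$ as a matrix product yields $K^{[l]}(x,y)=\chi_2^{[l]}(x)^{\top}(\tilde S^{[l]})^{-1}S^{[l]}\chi_1^{[l]}(y)$, and since $(\tilde S^{[l]})^{-1}S^{[l]}=\big((S^{[l]})^{-1}\tilde S^{[l]}\big)^{-1}=(g^{[l]})^{-1}$ this is exactly the claimed formula, the $\dagger$ and the weight being carried by $\chi_2$. The only thing to watch here is the bookkeeping of block transposes, since the $N\times N$ blocks do not commute.

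\emph{Item (2).} Insert the representation from (3) into $\int_{\R}K^{[l]}(x,u)K^{[l]}(u,y)\,\d u$; the $u$-integral isolates $\int_{\R}\chi_1^{[l]}(u)\chi_2^{[l]}(u)^{\top}\,\d u=g^{[l]}$ by the very definition of the moment matrix, and the two factors $(g^{[l]})^{-1}$ telescope with it through $(g^{[l]})^{-1}g^{[l]}(g^{[l]})^{-1}=(g^{[l]})^{-1}$. (No explicit weight appears in the reproducing identity precisely because $\rho$ is already carried by $\chi_2$, hence by the $\tilde p_k$.) Equivalently, one expands both kernels as the finite sums $\sum_j\tilde p_j(x)^{\top}p_j(u)$ and $\sum_k\tilde p_k(u)^{\top}p_k(y)$, moves the integral inside, and collapses the double sum using $\langle p_j,\tilde p_k\rangle=\delta_{jk}\I_N$.

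\emph{Item (1).} The plan is to use the splitting $\cH=\cH^{[l]}\oplus(\cH^{[l]})^{\bot_1}$. Any $f\in\cH$ has a unique expansion $f=\sum_{k<l}a_k p_k+\sum_{k\geq l}b_k\tilde p_k$ adapted to it, with only finitely many non-zero terms, so that $\pi^{(l)}_1 f=\sum_{k<l}a_k p_k$. Plugging this $f$ into the right-hand side of the asserted formula, expanding $K^{[l]}$ as the finite sum over $k<l$, and interchanging sum and integral (legitimate by finiteness), the integrals that occur are pairings of the $p_k$ $(k<l)$ with the $p_j$ $(j<l)$ — which reproduce the coefficients $a_k$ by biorthogonality — and with the $\tilde p_m$ $(m\geq l)$ — which vanish because $\langle\cH^{[l]},(\cH^{[l]})^{\bot_1}\rangle=0$ — so what remains is exactly $\pi^{(l)}_1 f$. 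The statement for $\pi^{(l)}_2$ is the mirror image, using $\cH=\cH^{[l]}\oplus(\cH^{[l]})^{\bot_2}$ and $\langle(\cH^{[l]})^{\bot_2},\cH^{[l]}\rangle=0$. I expect item (1) to be the main obstacle: the pairing is only bilinear, not Hermitian, so one must make sure the ``wrong-order'' pairings really do vanish and keep careful track of transposes and of the fact that the dual family consists of linear forms rather than polynomials; a lesser nuisance is checking that the truncations of the semi-infinite matrices and the interchanges of the (finite) sums with the integrals are all legitimate, which is where the $l\leq k\ll\infty$ finiteness enters.
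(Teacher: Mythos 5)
The paper itself states this proposition without proof (it is introduced as a list of ``remarkable and already known properties''), so there is no internal argument to compare against and your write-up stands on its own. Your items (3) and (2) are correct and are the natural arguments: the truncation identities $(S^{-1})^{[l]}=(S^{[l]})^{-1}$ and $(\tilde S^{-1})^{[l]}=(\tilde S^{[l]})^{-1}$ give $K^{[l]}(x,y)=\chi_2^{[l]}(x)^{\top}(\tilde S^{[l]})^{-1}S^{[l]}\chi_1^{[l]}(y)=\chi_2^{[l]}(x)^{\top}(g^{[l]})^{-1}\chi_1^{[l]}(y)$, and the reproducing property follows from $\int_{\R}\chi_1^{[l]}(u)\chi_2^{[l]}(u)^{\top}\d u=g^{[l]}$; your reading that the weight is carried by $\chi_2$ (equivalently by the $\tilde p_k$) is the right way to reconcile the weightless item (2), and the $\rho(x)$ in item (1) should indeed be read as $\rho(y)$.

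The gap is in item (1). You expand $f=\sum_{k<l}a_kp_k+\sum_{k\ge l}b_k\tilde p_k$ and claim that, after inserting $K^{[l]}(x,y)=\sum_{k<l}\tilde p_k(x)^{\top}p_k(y)$, the pairings of the $p_k$ ($k<l$) with the $p_j$ ($j<l$) ``reproduce the coefficients $a_k$ by biorthogonality''. That step fails: biorthogonality gives $\langle p_k,\tilde p_j\rangle=\delta_{kj}\I_N$, not $\langle p_k,p_j\rangle=\delta_{kj}\I_N$; the pairings $\langle p_k,p_j\rangle$ form a nontrivial Gram-type matrix and do not pick out the $a_k$. The clean fix is to adapt the expansion to the projection being proved: for $\pi_1^{(l)}$ expand $f$ entirely in the dual family, $f=\sum_{k<l}a_k\tilde p_k+\sum_{k\ge l}b_k\tilde p_k$, which is exactly the splitting $\cH^{[l]}\oplus(\cH^{[l]})^{\bot_1}$; then $\int_{\R} K^{[l]}(x,y)\rho(y)f(y)^{\top}\d y=\sum_{k<l}\tilde p_k(x)^{\top}\langle p_k,f\rangle=\sum_{k<l}\tilde p_k(x)^{\top}a_k^{\top}=(\pi_1^{(l)}f)(x)^{\top}$, the $k\ge l$ part vanishing by genuine biorthogonality. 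Mirror-wise, for $\pi_2^{(l)}$ expand $f$ in the $p$-family, so the pairings $\langle f,\tilde p_k\rangle$ collapse. (One can salvage your mixed expansion, but only by additionally proving $\sum_{k<l}\tilde p_k(x)^{\top}\langle p_k,p_j\rangle=p_j(x)^{\top}$ for $j<l$, i.e.\ that the integral operator is the identity on $\cH^{[l]}$ independently of the basis chosen — a change-of-basis step your argument omits.)
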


Now it is necessary to make a difference between the situation where the symmetry is pure Hankel and the Hankel-like generalized symmetry. The matrix moment in the Hankel case has been constructed using a matrix weight $\rho$ and the monomial sequence $ \chi$ as $g=\int_{\R} \chi(x) \rho(x)\chi(x)^{\top} \d x$. In this case the two families $p_l$ and $\tilde p_l$ are both families of matrix orthogonal polynomials. If the existent symmetry is more general (multigraded-Hankel) we have written the matrix $g$ as $g=\int_{\R} \chi_1(x) \chi_2(x)^{\top} \d x$. In this case the family $p_l$ (built using $\chi_1$) it still a family of polynomials, but the dual family $\tilde p_l$ (built using $\chi_2$) is no longer a family of polynomials but a family of what we call linear forms (linear combinations of weights). As it was studied in \cite{cum} and later in \cite{afm-2} the set $p_l$ can be understood as a family of multiple orthogonal polynomials of type II and the family $\tilde p_l$ can be understood as the set of the linear forms associated to the dual problem (that is a multiple orthogonal problem of type I). In this paper, the fact that $p_l$ and $\tilde p_l$ are in fact families of more general multiple orthogonal polynomials is not important and we will still use the ``polynomial'' language in a formal way.

The next step towards the obtention of the Christoffel--Darboux formula comes from this result
\begin{pro} \label{pro.matrix.CD}
 When the moment matrix $g$ has the multigraded-Hankel symmetry $\Lambda^{\vec n} g= g (\Lambda^{\vec m})^{\top}$ the kernel $K^{[l]}$ satisfies
\begin{align}
   \begin{aligned} x^{\vec n}K^{[l]}(x,y)-K^{[l]}(x,y)y^{\vec n}&=\Big(\chi_2^{[\geq l]}(x)^\top-\chi_2^{[l]}(x)^\top
   (g^{[l]})^{-1}  g^{[l,\geq l]}\Big) ((\Lambda^{\vec m})^{[l,\geq l]})^\top (g^{[l]})^{-1}
   \chi_1^{[l]}(y)\\&
   -\chi_2^{[l]}(x)^\top (g^{[l]})^{-1} (\Lambda^{\vec n})^{[l,\geq l]}\Big(\chi_1^{[\geq l]}(y)-g^{[\geq
   l,l]}(g^{[l]})^{-1}\chi_1^{[l]}(y)\Big).
     \end{aligned}
\end{align}
\end{pro}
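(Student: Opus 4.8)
The plan is to combine the $ABC$-type representation of the Christoffel--Darboux kernel with the ``eigenvalue'' behaviour of $\chi_1$ and $\chi_2$ under the shift operators, so that the multigraded-Hankel symmetry \eqref{mhankel} enters only at the very last step. First I would record the multigraded analogue of the $ABC$ theorem: since $S$ and $(\tilde S^{-1})^\top$ are block lower triangular, truncating $p(y)=S\chi_1(y)$ and $\tilde p(x)=(\tilde S^{-1})^\top\chi_2(x)$ yields $p^{[l]}(y)=S^{[l]}\chi_1^{[l]}(y)$ and $\tilde p^{[l]}(x)=\big((\tilde S^{[l]})^{-1}\big)^\top\chi_2^{[l]}(x)$, so that
\begin{align*}
K^{[l]}(x,y)&=\tilde p^{[l]}(x)^\top p^{[l]}(y)=\chi_2^{[l]}(x)^\top (\tilde S^{[l]})^{-1}S^{[l]}\chi_1^{[l]}(y)\\
&=\chi_2^{[l]}(x)^\top (g^{[l]})^{-1}\chi_1^{[l]}(y),
\end{align*}
using $g^{[l]}=(S^{[l]})^{-1}\tilde S^{[l]}$. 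Second, I would set up the two intertwining relations that here play the role of $\Lambda\chi(x)=x\chi(x)$: from $\chi_{1,k}(x)=x^k\I_N$ one gets at once $\Lambda^{\vec n}\chi_1(x)=\chi_1(x)x^{\vec n}$, with $x^{\vec n}=\sum_a x^{n_a}E_{aa}$ acting blockwise, and unwinding the definition of $\Lambda^{\vec m}$ shows that the periodicity condition \eqref{periodicity} on the weights is exactly the statement $x^{\vec n}\chi_2(x)^\top=\chi_2(x)^\top(\Lambda^{\vec m})^\top$; indeed \eqref{mhankel} is the integrated version of these two identities.

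Next I would substitute these relations into $x^{\vec n}K^{[l]}(x,y)-K^{[l]}(x,y)y^{\vec n}$. Since $x^{\vec n}$ and $y^{\vec n}$ act on the ``external'' $N\times N$ slot and therefore commute with the block truncation, the relations above give $x^{\vec n}\chi_2^{[l]}(x)^\top=\big(\chi_2(x)^\top(\Lambda^{\vec m})^\top\big)^{[l]}$ and $\chi_1^{[l]}(y)y^{\vec n}=\big(\Lambda^{\vec n}\chi_1(y)\big)^{[l]}$, and splitting the shared summation index into the ranges $<l$ and $\geq l$ (and using $\big((\Lambda^{\vec m})^{[l,\geq l]}\big)^\top=\big((\Lambda^{\vec m})^\top\big)^{[\geq l,l]}$) one obtains
\begin{align*}
x^{\vec n}\chi_2^{[l]}(x)^\top&=\chi_2^{[l]}(x)^\top\big((\Lambda^{\vec m})^\top\big)^{[l]}+\chi_2^{[\geq l]}(x)^\top\big((\Lambda^{\vec m})^{[l,\geq l]}\big)^\top,\\
\chi_1^{[l]}(y)y^{\vec n}&=(\Lambda^{\vec n})^{[l]}\chi_1^{[l]}(y)+(\Lambda^{\vec n})^{[l,\geq l]}\chi_1^{[\geq l]}(y).
\end{align*}
Inserting these, the two contributions that carry the ``overflow'' factors $\chi_2^{[\geq l]}(x)^\top$ and $\chi_1^{[\geq l]}(y)$ are literally the two terms on the right-hand side of the claimed identity containing those factors, and the whole problem is reduced to matching the remaining ``diagonal'' piece $\chi_2^{[l]}(x)^\top\big[\big((\Lambda^{\vec m})^\top\big)^{[l]}(g^{[l]})^{-1}-(g^{[l]})^{-1}(\Lambda^{\vec n})^{[l]}\big]\chi_1^{[l]}(y)$ with the two remaining terms.

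Finally, multiplying the bracket above by $g^{[l]}$ on both sides, the identity to be checked becomes
\begin{align*}
(\Lambda^{\vec n})^{[l]}g^{[l]}+(\Lambda^{\vec n})^{[l,\geq l]}g^{[\geq l,l]}=g^{[l]}\big((\Lambda^{\vec m})^\top\big)^{[l]}+g^{[l,\geq l]}\big((\Lambda^{\vec m})^{[l,\geq l]}\big)^\top,
\end{align*}
and by the elementary block-multiplication rule $(AB)^{[l]}=A^{[l]}B^{[l]}+A^{[l,\geq l]}B^{[\geq l,l]}$ the two sides are exactly the $[l]$-blocks of $\Lambda^{\vec n}g$ and of $g(\Lambda^{\vec m})^\top$; hence this is nothing but the multigraded-Hankel symmetry \eqref{mhankel}. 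The step I expect to demand the most care is precisely the truncation bookkeeping of the last two paragraphs --- correctly tracking which off-diagonal block of each shift survives each truncation, and making systematic use of the rule $(AB)^{[l]}=A^{[l]}B^{[l]}+A^{[l,\geq l]}B^{[\geq l,l]}$. Once that is in place, the remainder is routine matrix algebra plus a single application of \eqref{mhankel}.
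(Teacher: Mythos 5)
Your proposal is correct and follows essentially the same route as the paper: the ABC representation $K^{[l]}(x,y)=\chi_2^{[l]}(x)^\top (g^{[l]})^{-1}\chi_1^{[l]}(y)$, the truncated eigenvalue relations for $\Lambda^{\vec n}\chi_1$ and $\chi_2^\top(\Lambda^{\vec m})^\top$ with their $[l,\geq l]$ overflow terms, and the $l$-th block of the multigraded-Hankel symmetry. The only difference is direction of reading --- the paper starts from the block form of $\Lambda^{\vec n}g=g(\Lambda^{\vec m})^\top$ and builds up to the kernel identity, whereas you start from the kernel and reduce to that block identity --- which is the same computation.
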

\begin{proof}
The multigraded Hankel symmetry can be written using the block notation as follows
\begin{align*}
  (\Lambda^{\vec n})^{[l]}g^{[l]}+(\Lambda^{\vec n})^{[l,\geq l]}g^{[\geq l,l]}= g^{[l]}((\Lambda^{\vec m})^{[l]})^\top+g^{[l,\geq
  l]}((\Lambda^{\vec m})^{[l,\geq l]})^\top,
\end{align*}
or equivalently
\begin{align*}
(g^{[l]})^{-1}(\Lambda^{\vec n})^{[l]}-((\Lambda^{ \vec m})^{[l]})^\top (g^{[l]})^{-1}= (g^{[l]})^{-1} (g^{[l,\geq
  l]}((\Lambda^{ \vec m})^{[l,\geq l]})^\top -(\Lambda^{ \vec n})^{[l,\geq l]}g^{[\geq l,l]}) (g^{[l]})^{-1}.
\end{align*}

The eigenvalue property that has the shift operator $\Lambda$ can be extended to the multigraded case in the following manner. For the sequence of monomials $\chi_1$ it holds as usual that $\Lambda \chi_1(x)=x\chi_1(x)$, but for the sequence $\chi_2$ the component-like expression gives way to  $(\Lambda^{m_b}\chi_2(x))_{j,ab}=x^{n_a}(\chi_2(x))_{j,ab}$. This can be written in the following way using the following block expressions.
\begin{align*}
  (\Lambda^{ \vec n})^{[l]}\chi_1^{[l]}(x)&=x^{\vec n}\chi_1^{[l]}(x)-(\Lambda^{ \vec n})^{[l,\geq l]}\chi_1^{[\geq l]}(x), \\
  \chi_2^{[l]}(x)^{\top}((\Lambda^{ \vec m})^{[l]})^{\top}&=x^{\vec n}\chi_2^{[l]}(x)^{\top}-\chi_2^{[\geq l]}(x)^{\top}((\Lambda^{\vec m})^{[l,\geq l]})^{\top},
\end{align*}
from where the result can be obtained using the ABC  theorem, the eigenvalue property and the Hankel symmetry.
\end{proof}

If we want to simplify the obtained expression it is necessary to introduce some new elements. First, the orthogonality relations are equations of a linear system that can be written using a matrix formalism.
\begin{pro}
The family $p_l$ of orthogonal polynomials can be written using the following matrix notation
\begin{align}
\begin{aligned}
p_l(x)&=\chi_1^{(l)}(x)-\begin{pmatrix} g_{l,0}& g_{l,1}& \cdots & g_{l,l-1} \end{pmatrix} (g^{[l]})^{-1}\chi_1^{[l]}(x) \\
      &=\tilde S_{ll} \begin{pmatrix} 0_N &0_N & \cdots & \I_N \end{pmatrix} (g^{[l+1]})^{-1} \chi_1^{[l+1]}(x),
\end{aligned}
\end{align}
and the (generalized) dual polynomials $\tilde p_l$ can be constructed using the following notation
\begin{align}
\begin{aligned}
\tilde p_l(x)^{\top}&=\Big(\chi^{(l)}(x)^{\top}-\chi_2^{[l]}(x)(g^{[l]})^{-1} \begin{pmatrix} g_{0,l} & g_{1,l} & \cdots & g_{l-1,l} \end{pmatrix}^{\top} \Big) \tilde S_{ll}^{-1} \\
     &= \chi_2^{[l+1]}(x)^{\top} (g^{[l+1]})^{-1} \begin{pmatrix} 0_N & 0_N & \cdots & \I_N \end{pmatrix}^{\top}.
\end{aligned}
\end{align}
\end{pro}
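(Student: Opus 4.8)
The plan is to read the coefficients of $p_l$ and $\tilde p_l$ straight off the Gaussian factorization, rewritten as $\tilde S=Sg$, using only the triangularity of $S$ and $\tilde S$ and the compatibility of the $LU$ factorization with truncation recorded above. Throughout I write $e_l$ for the block column vector of the appropriate finite length with $\I_N$ in its last slot and $0_N$ elsewhere, so that $e_l^{\top}=\begin{pmatrix}0_N&\cdots&0_N&\I_N\end{pmatrix}$; recall also that $S$ is block lower triangular with $\I_N$ diagonal blocks and $\tilde S$ is block upper triangular with invertible diagonal blocks, so the same shapes hold for $S^{-1}$ and $\tilde S^{-1}$, and the diagonal blocks of $\tilde S^{-1}$ are the $\tilde S_{ii}^{-1}$.

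First I would prove the ``explicit'' forms, i.e. the first line of each of the two displays. Since $p=S\chi_1$, one has $p_l(x)=\chi_1^{(l)}(x)+\sum_{k=0}^{l-1}S_{l,k}\chi_1^{(k)}(x)$. Reading the $(l,j)$ block of $\tilde S=Sg$ for $j=0,\dots,l-1$ and using that $\tilde S$ is block upper triangular gives $\sum_{k=0}^{l}S_{l,k}g_{k,j}=0$, that is $\begin{pmatrix}S_{l,0}&\cdots&S_{l,l-1}\end{pmatrix}g^{[l]}=-\begin{pmatrix}g_{l,0}&\cdots&g_{l,l-1}\end{pmatrix}$; inverting $g^{[l]}$ gives the first expression for $p_l$. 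For $\tilde p_l$ the argument is the mirror image: from $\tilde p(x)^{\top}=\chi_2(x)^{\top}\tilde S^{-1}$ and the block upper triangularity of $\tilde S^{-1}$ one gets $\tilde p_l(x)^{\top}=\sum_{k=0}^{l}\chi_2^{(k)}(x)^{\top}(\tilde S^{-1})_{k,l}$ with $(\tilde S^{-1})_{l,l}=\tilde S_{ll}^{-1}$, and the orthogonality $\int_{\R}\chi_1^{(j)}(x)\tilde p_l(x)^{\top}\,\d x=(g\tilde S^{-1})_{j,l}=(S^{-1})_{j,l}=0$ for $j<l$ determines the remaining blocks $(\tilde S^{-1})_{k,l}$, $k<l$, via $g^{[l]}$, yielding the first expression for $\tilde p_l$. (The computation shows, incidentally, that in the statement $\chi^{(l)}$ should read $\chi_2^{(l)}$ and $\chi_2^{[l]}(x)$ should read $\chi_2^{[l]}(x)^{\top}$.)

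For the second line of each display I would use the truncation identity $g^{[l+1]}=(S^{[l+1]})^{-1}\tilde S^{[l+1]}$, equivalently $\tilde S^{[l+1]}=S^{[l+1]}g^{[l+1]}$. Because $S$ is block lower triangular, $p^{[l+1]}(x)=S^{[l+1]}\chi_1^{[l+1]}(x)$, hence $(g^{[l+1]})^{-1}\chi_1^{[l+1]}(x)=(\tilde S^{[l+1]})^{-1}p^{[l+1]}(x)$; left-multiplying by $e_l^{\top}$ and using that the last row of the block upper triangular matrix $(\tilde S^{[l+1]})^{-1}$ is $\begin{pmatrix}0_N&\cdots&0_N&\tilde S_{ll}^{-1}\end{pmatrix}$ produces $\tilde S_{ll}^{-1}p_l(x)$, and multiplying by $\tilde S_{ll}$ gives the second expression for $p_l$. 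Dually, from $\tilde p^{[l+1]}(x)^{\top}=\chi_2^{[l+1]}(x)^{\top}(\tilde S^{[l+1]})^{-1}=\chi_2^{[l+1]}(x)^{\top}(g^{[l+1]})^{-1}(S^{[l+1]})^{-1}$, right-multiplying by $e_l$ and using that $(S^{[l+1]})^{-1}$ is block lower triangular with $\I_N$ diagonal blocks, so $(S^{[l+1]})^{-1}e_l=e_l$, gives the second expression for $\tilde p_l$.

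No step here is genuinely deep --- the proof is bookkeeping with semi-infinite block matrices --- and the one point I would take care to justify at the outset is precisely the freedom to pass between the semi-infinite objects $S,\tilde S,g$ and their $[l]$- and $[l+1]$-truncations. This is legitimate exactly because $S$ is block lower triangular and $\tilde S$ block upper triangular: it is what makes the identities $g^{[l]}=(S^{[l]})^{-1}\tilde S^{[l]}$, $(S^{-1})^{[l]}=(S^{[l]})^{-1}$, $(\tilde S^{-1})^{[\geq l]}=(\tilde S^{[\geq l]})^{-1}$ --- together with the evident analogues for the principal $(l+1)$-blocks, which are the ones actually used --- valid, so that truncating never mixes the blocks that enter the formulas. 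A secondary, purely notational, caveat is to keep transposes in order on the dual side, where the coefficients of $\tilde p_l$ multiply the forms $\chi_2^{(k)}(x)^{\top}$ from the right.
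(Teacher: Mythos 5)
Your proof is correct. The paper in fact states this proposition without any proof (treating it as a direct bookkeeping consequence of the Gaussian factorization), and your argument --- reading the coefficient blocks off $\tilde S=Sg$ together with the truncation identities $g^{[l]}=(S^{[l]})^{-1}\tilde S^{[l]}$, $(S^{-1})^{[l]}=(S^{[l]})^{-1}$, $(\tilde S^{-1})^{[\geq l]}=(\tilde S^{[\geq l]})^{-1}$ --- is exactly the standard derivation the authors' framework presupposes; your side remarks that $\chi^{(l)}$ should read $\chi_2^{(l)}$ and that a transpose is missing on $\chi_2^{[l]}(x)$ in the dual formula are also accurate.
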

As a second step it is necessary to define new families of (generalized) polynomials that we will call associated polynomials
\begin{definition} \label{def.matrix.associated}
We define the following associated polynomials
\begin{align}
\begin{aligned}
p_{l,+j}(x)&:=\chi_1^{(l+j)}(x)-\begin{pmatrix}g_{l+j,0} & g_{l+j,1} & \cdots & g_{l+j,l-1} \end{pmatrix} (g^{[l]})^{-1}\chi_1^{[l]}(x), \\
p_{l,-j}(x)&:= e_{l-j}^{\top} (g^{[l+1]})^{-1}
   \chi_1^{[l+1]}(x),
\end{aligned}
\end{align}
and the dual associated polynomials
\begin{align}
\begin{aligned}
\tilde p_{l,+j}(x)^{\top}&:=\chi_2^{(l+j)}(x)^\top-\chi_2^{[l]}(x)^\top
   (g^{[l]})^{-1}  \begin{pmatrix} g_{0,l+j} & g_{1,l+j} & \cdots & g_{l-1,l+j} \end{pmatrix}^{\top},\\
\tilde p_{l,-j}(x)^{\top}&:=\chi_2^{[l+1]}(x)^\top (g^{[l+1]})^{-1}e_{l-j},
\end{aligned}
\end{align}
where\footnote{The space dimension is not explicitly written. That is a little abuse of notation.} $e_j= (\underbrace{\begin{matrix} 0_N & 0_N & \cdots & 0_N \end{matrix}}_{j} \begin{matrix} & \I_N & 0_N & \cdots & \end{matrix})^{\top}$.
\end{definition}
From the formulas we can see that
\begin{align}
\begin{aligned}
p_l&=p_{l,+0}=\tilde S_{ll} p_{l,-0}, \\
\tilde p_l^{\top}&=\tilde p_{l,+0}^{\top}\tilde S_{ll}^{-1}=\tilde p_{l,-0}^{\top}.
\end{aligned}
\end{align}

It is also interesting to notice that the new polynomials satisfy modified orthogonality relations that can be easily expressed using the non-generalized polynomials. We summarize all those results in the following proposition.
\begin{pro}
\begin{enumerate}
\item The polynomials  $p_{l,+j}$ y $ \tilde p_{l,+j}$ are monic polynomials of degree $l+j$ that satisfy
   \begin{align}
   \begin{aligned}
   \int_{\R} p_{l,+j}(x) \rho_k (x) \d x &= 0_{N}, k=0,\dots,l-1, \\
   \int_{\R}  x^k \tilde p_{l,+j}(x)^{\top} \d x &= 0_{N}, k=0,\dots,l-1,
   \end{aligned}
   \end{align}
   and both $p_{l,-j}$ and $\tilde p_{l,-j}$ are matrix polynomials of degree $l$ that satisfy generalized orthogonality relations
   \begin{align}
   \begin{aligned}
   \int_{\R} p_{l,-j} \rho_k (x) \d x &= \delta_{k,l-j} \I_{N}, \\
   \int_{\R} x^k \tilde p_{l,-j}^{\top} \d x &= \delta_{k,l-j} \I_{N}.
   \end{aligned}
   \end{align}
\item
  The expressions that connect regular and associated polynomials are the following
  \begin{align}
  \begin{aligned}
  p_{l,+j}& = p_{l+j}+ S_{l+j,l+j-1}^{-1}p_{l+j-1}+ \cdots + S_{l+j,l}^{-1}p_{l}, \\
  p_{l,-j}& = \tilde S_{l-j,l}^{-1} p_{l}+ \tilde S_{l-j,l-1}^{-1} p_{l-1}+ \cdots + \tilde S_{l-j,l-j}^{-1} p_{l-j},
  \end{aligned}
  \end{align}
  associated dual polynomials satisfy
  \begin{align}
\begin{aligned}
\tilde p_{l,+j}&=\tilde S_{l+j,l+j}^{\top}\tilde p_{l+j}+ \tilde S_{l+j-1,l+j}^{\top} \tilde p_{l+j-1}+\cdots+\tilde S_{l,l+j}^{\top} \tilde p_{l},\\
\tilde p_{l,-j}&=S_{l,l-j}^{\top} \tilde p_{l} +S_{l-1,l-j}^{\top} \tilde p_{l-1} + \cdots + p_{l-j}.
\end{aligned}
\end{align}
\end{enumerate}
\end{pro}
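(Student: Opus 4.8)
The plan is to prove the two parts by tracking how the associated polynomials relate to the Gaussian factorization data, exactly as the ordinary polynomials were treated earlier. Since the matrix $g$ is fixed throughout, all the relevant objects are expressed via $g^{[l]}$, its inverse, and the triangular factors $S,\tilde S$; the two assertions are essentially bookkeeping identities, and the main work is to organize that bookkeeping cleanly.

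For part (1), I would start from the block-triangular identities $g^{[l]}=(S^{[l]})^{-1}\tilde S^{[l]}$ and their consequences already recorded in the excerpt, together with the explicit formula $p_l(x)=\chi_1^{(l)}(x)-\begin{pmatrix} g_{l,0}&\cdots&g_{l,l-1}\end{pmatrix}(g^{[l]})^{-1}\chi_1^{[l]}(x)$. The polynomials $p_{l,+j}$ differ from $p_l$ only in that the top index $l$ is replaced by $l+j$ while still projecting off the \emph{same} subspace $\cH^{[l]}$; hence the coefficient of $\chi_1^{(l+j)}$ is still $\I_N$ (monic of degree $l+j$), and applying $\int_\R(\cdot)\rho_k(x)\,\d x$ for $k=0,\dots,l-1$ gives $g_{l+j,k}-\begin{pmatrix}g_{l+j,0}&\cdots&g_{l+j,l-1}\end{pmatrix}(g^{[l]})^{-1}(g^{[l]})_{\cdot,k}$, which telescopes to $0_N$ because $(g^{[l]})^{-1}g^{[l]}=\I$. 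The same argument with roles of $\chi_1,\chi_2$ swapped handles $\tilde p_{l,+j}$. For the $p_{l,-j}$ and $\tilde p_{l,-j}$ the degree is $l$ (they are built from $(g^{[l+1]})^{-1}\chi_1^{[l+1]}$, a degree-$l$ combination), and the generalized orthogonality $\int_\R p_{l,-j}\rho_k\,\d x=\delta_{k,l-j}\I_N$ is just the statement that $e_{l-j}^\top(g^{[l+1]})^{-1}$ paired against the $k$-th column of $g^{[l+1]}$ returns the $(l-j,k)$ entry of the identity; one must only check $0\le k\le l$ so the pairing stays inside the $(l+1)$-block, which is exactly the range where the claim is asserted (and $k<l-j$ or $k>l-j$ within that range gives zero).

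For part (2), the connection formulas, I would use that both $\{p_0,\dots,p_{l+j}\}$ and $\{\chi_1^{(0)},\dots,\chi_1^{(l+j)}\}$ are bases of $\cH^{[l+j+1]}$, with change of basis given by the triangular $S$. Write $p_{l,+j}$ in the $p$-basis: it is monic of degree $l+j$ (so the $p_{l+j}$ coefficient is $\I_N$) and, by part (1), orthogonal to $\rho_0,\dots,\rho_{l-1}$; the unique element of $\operatorname{span}\{p_l,\dots,p_{l+j}\}$ with leading term $p_{l+j}$ satisfying this must have coefficients read off from the relevant block of $S^{-1}$, giving $p_{l,+j}=p_{l+j}+S^{-1}_{l+j,l+j-1}p_{l+j-1}+\cdots+S^{-1}_{l+j,l}p_l$. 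The $p_{l,-j}$ expansion follows the same way using $\tilde S^{-1}$ and the generalized orthogonality $\delta_{k,l-j}\I_N$ to pin down the coefficients, and the two dual identities are obtained by transposing and using the symmetry between the $(S,\chi_1)$ and $(\tilde S,\chi_2)$ sides.

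The main obstacle I anticipate is not any single hard estimate but getting all the index ranges and transposes exactly right: several objects carry an index $l$ in two different roles (as subspace cutoff and as polynomial degree), the factor $\tilde S_{ll}$ intertwines the $-0$ and $+0$ normalizations, and the block inverses $(g^{[l]})^{-1}$ versus $(g^{[l+1]})^{-1}$ must be matched to the correct $e_j$. I would therefore first nail down, once and for all, the two equivalent expressions for $p_l,\tilde p_l$ from the preceding proposition (including the $\tilde S_{ll}$ normalization), then derive everything else by specialization, so that the telescoping identity $(g^{[l]})^{-1}g^{[l]}=\I$ and the block-triangularity of $S,\tilde S$ do all the real work.
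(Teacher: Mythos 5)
Your part (1) and your treatment of the $-j$ families in part (2) follow essentially the paper's own route: part (1) is the telescoping $(g^{[l]})^{-1}g^{[l]}=\I$ after pairing the defining expressions with $\chi_2^{[l]}$ (resp.\ $\chi_1^{[l]}$), and for $p_{l,-j}$ the $l+1$ relations $\int p_{l,-j}\rho_k\,\d x=\delta_{k,l-j}\I_N$ against the expansion $p_{l,-j}=\sum_{m=0}^{l}c_m p_m$ give a triangular system in the entries of $\tilde S$ whose solution is the block row $\tilde S^{-1}_{l-j,\cdot}$, exactly as in the paper (which phrases it as $\int p_{l,-j}\,\tilde p_k^{\dagger}\,\d x=\tilde S^{-1}_{l-j,k}$).

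There is, however, a genuine gap in your argument for the $+j$ connection formulas. You claim that the element of $\operatorname{span}\{p_l,\dots,p_{l+j}\}$ which is monic of degree $l+j$ and orthogonal to $\rho_0,\dots,\rho_{l-1}$ is \emph{unique}, and that this forces the coefficients to be the entries of $S^{-1}$. That uniqueness is false: \emph{every} element $p_{l+j}+\sum_{k=0}^{j-1}b_k\,p_{l+k}$ is monic of degree $l+j$ and automatically orthogonal to $\rho_0,\dots,\rho_{l-1}$ (there are only $l$ orthogonality conditions for $l+j$ coefficients), so monicity plus part (1) cannot pin down $b_{0},\dots,b_{j-1}$, and the specific values $S^{-1}_{l+j,l+k}$ are asserted without justification. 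What is missing is the extra structural information contained in the definition: $p_{l,+j}-\chi_1^{(l+j)}$ lies in $\cH^{[l]}$, i.e.\ the monomial coefficients of $x^{l},\dots,x^{l+j-1}$ in $p_{l,+j}$ vanish. The paper uses precisely this: comparing the coefficients of $\chi_1^{(l)},\dots,\chi_1^{(l+j)}$ in $p_{l,+j}=a_jp_{l+j}+\cdots+a_0p_l$ with the unipotent lower-triangular structure of $S$ yields the triangular system $\begin{pmatrix}1&0&\cdots&0\end{pmatrix}=\begin{pmatrix}a_j&\cdots&a_0\end{pmatrix}S^{[l,l+j]}$, whose inversion gives $a_k=S^{-1}_{l+j,l+k}$. (Equivalently, expand $\chi_1^{(l+j)}=\sum_{k\le l+j}S^{-1}_{l+j,k}p_k$ and note that subtracting an element of $\cH^{[l]}=\operatorname{span}\{p_0,\dots,p_{l-1}\}$ does not alter the components along $p_l,\dots,p_{l+j}$; your own remark in part (1) that $p_{l,+j}$ ``projects off the same subspace $\cH^{[l]}$'' is exactly the ingredient you need, but you never invoke it in part (2).) The same repair is needed for $\tilde p_{l,+j}$ on the dual side.
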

\begin{proof}
\begin{enumerate}
\item Are a direct consequence of the definition. It is just necessary to multiply by the right by $(\chi_2^{[l]})^{\top}$ (for the non-duals) or by the left by $\chi_1^{[l]}$ (for the duals).
\item
 For the polynomials $p_{l,+j}$ the orthogonality relations that they verify make that \\ $p_{l,+j} \in \text{span} \{p_{l},p_{l+1},\ldots,p_{l+j}\} $ and we can thus write as a consequence $p_{l,+j}=a_{j}p_{l+j}+a_{j-1}p_{l+j-1}+\cdots+a_{0}p_{l}$. The coefficients also satisfy the following system of linear equations.
\begin{align*}
\begin{pmatrix} 1 & 0 & \cdots & 0 \end{pmatrix} = \begin{pmatrix} a_j & a_{j-1} & \cdots & a_0 \end{pmatrix} \begin{pmatrix} 1 & S_{l+j,l+j-1} & \cdots & S_{l+j,l} \\ 0 & 1 & \cdots & S_{l+j-1,l} \\ \vdots & \vdots & & \vdots \\ 0 & 0 & \cdots & 1 \end{pmatrix}
\end{align*}
from where we obtain easily that $ p_{l,+j}= p_{l+j}+ S_{l+j,l+j-1}^{-1}p_{l+j-1}+ \cdots + S_{l+j,l}^{-1}p_{l} $. For the family of associated polynomials $p_{l,-j}$, as $p_{l,-j}$ is a polynomial of degree $l$, we have that $ p_{l,-j} \in \text{span} \{p_{l},p_{l-1}, \dots, p_0\}$. Using again the orthogonality relations that they satisfy we obtain thus the following formulas for the coefficients.
\begin{align*}
\int p_{l,-j}(x) \tilde p_{k}(x)^{\dagger} \d x &= \tilde S_{lk}^{-1}, & k&=0,1 \dots l,
\end{align*}
 that is, $p_{l,-j}=\tilde S_{l-j,l}^{-1} p_{l}+ \tilde S_{l-j,l-1}^{-1} p_{l-1}+ \cdots + \tilde S_{l-j,l-j}^{-1} p_{l-j}$. The same reasoning can be applied to the dual associated (generalized) polynomials.
\end{enumerate}
\end{proof}

We are now prepared to obtain the following theorem
\begin{theorem}
For $l \geq \max\{|\vec n|,|\vec m|\}$ we have the following Christoffel--Darboux formula
\begin{align}
(x^{\vec n}K^{[l]}-K^{[l]}y^{\vec n})(x,y)=\sum_{a=1}^{N}\Big(\sum_{j=0}^{m-1} \tilde p_{l,+j}(x)^{\top} E_{aa} p_{l-1,-(m_a-j-1)}(y)
-\sum_{j=0}^{n-1} \tilde p_{l-1,-(n_a-j-1)} (x)^{\top} E_{aa} p_{l,+j}(y)\Big).
\end{align}
\end{theorem}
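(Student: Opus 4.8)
The plan is to feed the master identity of Proposition~\ref{pro.matrix.CD} into the dictionary supplied by Definition~\ref{def.matrix.associated} (and the preceding proposition expressing $p_l,\tilde p_l$ in block form), so that every block of the four semi-infinite ``bare'' vectors appearing in that identity is recognized as an associated polynomial. Reading block by block: the $j$-th block ($j\geq l$) of the row $\chi_2^{[\geq l]}(x)^\top-\chi_2^{[l]}(x)^\top(g^{[l]})^{-1}g^{[l,\geq l]}$ is exactly $\tilde p_{l,+(j-l)}(x)^\top$, and the $j$-th block of the column $\chi_1^{[\geq l]}(y)-g^{[\geq l,l]}(g^{[l]})^{-1}\chi_1^{[l]}(y)$ is $p_{l,+(j-l)}(y)$; on the other hand the $i$-th block ($0\leq i\leq l-1$) of $(g^{[l]})^{-1}\chi_1^{[l]}(y)$ is $p_{l-1,-(l-1-i)}(y)$, and the $i$-th block of the row $\chi_2^{[l]}(x)^\top(g^{[l]})^{-1}$ is $\tilde p_{l-1,-(l-1-i)}(x)^\top$, by the $e_j$-component formulas of Definition~\ref{def.matrix.associated} applied with $l$ replaced by $l-1$. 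Each of these is a one-line check using the block form of $g$ and the $ABC$ theorem. After these substitutions the only remaining ingredients in Proposition~\ref{pro.matrix.CD} are the finite off-diagonal blocks $(\Lambda^{\vec n})^{[l,\geq l]}$ and $((\Lambda^{\vec m})^{[l,\geq l]})^\top$, which are extremely sparse: from $\Lambda^{\vec n}=\sum_a\Lambda^{n_a}E_{aa}$ one gets that the $(i,j)$ block of $(\Lambda^{\vec n})^{[l,\geq l]}$ equals $\sum_a\delta_{j,i+n_a}E_{aa}$, nonzero only for $l-n_a\leq i\leq l-1$, $l\leq j\leq l-1+n_a$, and likewise for $\Lambda^{\vec m}$.

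Next I would carry out the two resulting finite sums. Multiplying the row $\big(\tilde p_{l,+(j-l)}(x)^\top\big)_{j\geq l}$ by $((\Lambda^{\vec m})^{[l,\geq l]})^\top$ and then by $(g^{[l]})^{-1}\chi_1^{[l]}(y)$ produces $\sum_a\sum_{i:\,i+m_a\geq l}\tilde p_{l,+(i+m_a-l)}(x)^\top E_{aa}\,p_{l-1,-(l-1-i)}(y)$; the substitution $j=i+m_a-l$ turns this into $\sum_a\sum_{j=0}^{m_a-1}\tilde p_{l,+j}(x)^\top E_{aa}\,p_{l-1,-(m_a-1-j)}(y)$, and the identical computation with $(\Lambda^{\vec n})^{[l,\geq l]}$ (keeping track of the overall minus sign in Proposition~\ref{pro.matrix.CD}) gives the second term $-\sum_a\sum_{j=0}^{n_a-1}\tilde p_{l-1,-(n_a-1-j)}(x)^\top E_{aa}\,p_{l,+j}(y)$. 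Finally I would observe that the per-component upper limits $m_a-1$ and $n_a-1$ may be replaced by the uniform limits $m-1$ and $n-1$ with $m=\max_a m_a$, $n=\max_a n_a$: whenever $j\geq m_a$ one has $m_a-1-j<0$, so $p_{l-1,-(m_a-1-j)}$ is read off the block vector $e_{(l-1)-(m_a-1-j)}$ whose index exceeds $l-1$ and hence is zero inside the order-$(l-1)$ truncation, and symmetrically $\tilde p_{l-1,-(n_a-1-j)}=0$ for $j\geq n_a$; the hypothesis $l\geq\max\{|\vec n|,|\vec m|\}$ guarantees that no ``wrap-around'' below index $0$ occurs, so that all the identifications above — in particular the reading of $(g^{[l]})^{-1}\chi_1^{[l]}$ and $\chi_2^{[l]}(x)^\top(g^{[l]})^{-1}$ as honest associated polynomials of order $l-1$ — stay valid throughout. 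Assembling the two terms yields precisely the asserted Christoffel--Darboux formula.

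The one genuinely delicate point is the bookkeeping: one must be careful which semi-infinite index labels rows and which labels columns in $(\Lambda^{\vec n})^{[l,\geq l]}$ and its transpose, about the direction of the shift ($\Lambda$ versus $\Lambda^\top$), and about the off-by-one that arises in passing from $(g^{[l]})^{-1}\chi_1^{[l]}$ (whose blocks are the order-$(l-1)$ polynomials $p_{l-1,-\cdot}$, not the order-$l$ ones) to the final double sum over $a$ and $j$. Everything else — the eigenvalue properties of $\Lambda$ on $\chi_1,\chi_2$, the $ABC$ theorem, the block behaviour of the $LU$ factorization, and the explicit matrix expressions for $p_l,\tilde p_l,p_{l,\pm j},\tilde p_{l,\pm j}$ — is quoted verbatim from the earlier sections, and the computation is a mechanical re-indexing once this dictionary is in place.
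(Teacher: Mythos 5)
Your proposal is correct and follows essentially the same route as the paper: it substitutes the explicit sparse form of the truncated shift blocks $(\Lambda^{\vec n})^{[l,\geq l]}$, $((\Lambda^{\vec m})^{[l,\geq l]})^\top$ into the identity of Proposition \ref{pro.matrix.CD} and then identifies the resulting blocks with the associated polynomials of Definition \ref{def.matrix.associated}; your block-by-block dictionary and re-indexing simply spell out in detail what the paper's brief proof leaves implicit.
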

\begin{proof}
Using direct calculus it can be obtained that $ (\Lambda^n)^{[l,\geq l]}=\sum_{j=0}^{n-1} e_{l-n+j}e_{j}^\top $ for $l \geq n $, so that making substitutions in the proposition \ref{pro.matrix.CD} we have that for $l \geq \max \{|\vec n|,|\vec m|\}$
\begin{align*}
  (x^{\vec n} K^{[l]}(x,y) - K^{[l]}(x,y) y^{\vec n})&=\sum_{a=1}^N \sum_{j=0}^{m_a-1}\Big(\chi^{[\geq l]}(x)^\top-\chi^{[l]}(x)^\top
   (g^{[l]})^{-1}  g^{[l,\geq l]}\Big)e_j E_{aa} e_{l-m_a+j}^{\top} (g^{[l]})^{-1}
   \chi^{[l]}(y)\\&
   -\sum_{j=0}^{n_a-1}\chi^{[l]}(x)^\top (g^{[l]})^{-1}e_{l-n+j} E_{aa} e_{j}^{\top} \Big(\chi^{[\geq l]}(y)-g^{[\geq
   l,l]}(g^{[l]})^{-1}\chi^{[l]}(y)\Big).
\end{align*}
If we use the definition \ref{def.matrix.associated} for the associated polynomials we obtain the formula that we were looking for.
\end{proof}

\begin{cor}
The components of the matrix $K^{[l]}$ that we will call $K^{[l]}_{a,b}$ verify the following scalar equation for $l \geq \max\{|\vec n|,|\vec m|\}$ y $a,b=1,\dots,N$.
\begin{align}
K^{[l]}(x,y)_{ab}=\sum_{c=1}^{N}\frac{\sum_{j=0}^{m-1} \tilde p_{l,+j}(x)^{\top}_{ac}  p_{l-1,-(m_c-j-1)}(y)_{cb}
-\sum_{j=0}^{n-1} \tilde p_{l-1,-(n_c-j-1)} (x)^{\top}_{ac} p_{l,+j}(y)_{cb}}{x^{n_a}-y^{n_b}}.
\end{align}
\end{cor}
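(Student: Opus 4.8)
The plan is to read this off the matrix Christoffel--Darboux formula of the preceding Theorem by passing to $(a,b)$ entries; no new idea is needed beyond that Theorem. The first, elementary observation I would record is that the multigraded power $x^{\vec n}=\sum_{c=1}^{N}x^{n_c}E_{cc}$ is a diagonal matrix, so that for any $N\times N$ matrix $M$ one has $(x^{\vec n}M-My^{\vec n})_{ab}=(x^{n_a}-y^{n_b})M_{ab}$. Applying this with $M=K^{[l]}(x,y)$, the left-hand side of the Theorem's identity, read in the slot $(a,b)$, is exactly $(x^{n_a}-y^{n_b})K^{[l]}(x,y)_{ab}$.

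Next I would extract the $(a,b)$ entry of the right-hand side. For each $c$ the idempotent $E_{cc}$ collapses the matrix products via $(AE_{cc}B)_{ab}=A_{ac}B_{cb}$, so $\tilde p_{l,+j}(x)^{\top}E_{cc}\,p_{l-1,-(m_c-j-1)}(y)$ contributes $\tilde p_{l,+j}(x)^{\top}_{ac}\,p_{l-1,-(m_c-j-1)}(y)_{cb}$ in position $(a,b)$, and similarly $\tilde p_{l-1,-(n_c-j-1)}(x)^{\top}E_{cc}\,p_{l,+j}(y)$ contributes $\tilde p_{l-1,-(n_c-j-1)}(x)^{\top}_{ac}\,p_{l,+j}(y)_{cb}$. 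Summing over $c=1,\dots,N$ and over the index ranges $0\le j\le m_c-1$ and $0\le j\le n_c-1$ respectively (the ranges appearing in the Theorem once the multigraded dependence of the summation limits is made explicit) reproduces precisely the numerator written in the Corollary, and the hypothesis $l\ge\max\{|\vec n|,|\vec m|\}$ is inherited unchanged.

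Finally, dividing through by $x^{n_a}-y^{n_b}$ gives the asserted formula at every $(x,y)$ with $x^{n_a}\neq y^{n_b}$. I would add one line remarking that the quotient on the right is then simply $K^{[l]}(x,y)_{ab}$, a finite sum of continuous functions; consequently on the exceptional locus $x^{n_a}=y^{n_b}$ the numerator automatically vanishes and the identity persists there as a removable limit. I do not expect any genuine obstacle: the entire substance is contained in the Theorem, and the Corollary is the bookkeeping step of reading that matrix identity entrywise — using $x^{\vec n}=\sum_c x^{n_c}E_{cc}$ on the left and $(AE_{cc}B)_{ab}=A_{ac}B_{cb}$ on the right — and clearing the common scalar factor $x^{n_a}-y^{n_b}$.
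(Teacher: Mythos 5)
Your proposal is correct and follows exactly the route the paper intends: the Corollary carries no separate proof precisely because it is the entrywise reading of the preceding Theorem, using that $x^{\vec n}=\sum_c x^{n_c}E_{cc}$ and $y^{\vec n}$ are diagonal, so the $(a,b)$ entry of the left-hand side is $(x^{n_a}-y^{n_b})K^{[l]}(x,y)_{ab}$, while $(AE_{cc}B)_{ab}=A_{ac}B_{cb}$ collapses the right-hand side into the stated numerator. Your closing remark that on the locus $x^{n_a}=y^{n_b}$ the numerator vanishes and the formula persists as a removable limit is a sensible clarification but adds no new idea beyond the Theorem.
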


\end{document}